\documentclass[12pt,twoside,a4paper]{amsart}
\usepackage[textwidth=15cm,textheight=22cm,centering]{geometry}
\usepackage[T1]{fontenc}
\usepackage{lmodern,microtype}
\usepackage{amsmath,amssymb,amsthm,mathtools,mathrsfs}
\usepackage{xurl}
\usepackage[hidelinks]{hyperref}
\hypersetup{pdftitle={Answers to three problems in Dinh-Sibony's list on holomorphic foliation theory},pdfauthor={Zhangchi Chen}}
\newtheorem{maintheorem}{Theorem}

\newtheorem{lemma}{Lemma}[section]
\theoremstyle{definition}
\newtheorem{openproblem}{Open problem}[section]
\newenvironment{dsproblem}[1]{\par\medskip\noindent
  \textbf{Problem #1 \textup{(Dinh--Sibony)}.}\ \itshape}{\par\medskip}
\newcommand{\ddc}{dd^c}
\newcommand{\F}{\mathcal F}
\newcommand{\D}{\mathbb D}
\newcommand{\abs}[1]{\lvert#1\rvert}
\newcommand{\norm}[1]{\lVert#1\rVert}
\newcommand{\pair}[2]{\langle#1,#2\rangle}
\DeclareMathOperator{\dist}{dist}
\title[Three problems in Dinh-Sibony's list]{Answers to three problems in Dinh-Sibony's list on holomorphic foliation theory}
\author{Zhangchi Chen}
\date{}
\subjclass[2020]{Primary 37F75; Secondary 32Q30, 32U40, 37A30.}
\keywords{Holomorphic foliation, harmonic current, Poincar\'e mass,
Nevanlinna characteristic, quantitative equidistribution.}
\address{Zhangchi Chen, School of Mathematical Sciences, Key Laboratory of MEA
(Ministry of Education), and Shanghai Key Laboratory of PMMP, East
China Normal University, Shanghai 200241, China.}
\email{zcchen@math.ecnu.edu.cn}
\begin{document}
\begin{abstract}
We address Problems~2.2(i), 2.2(ii), and~2.4 in Dinh--Sibony's list.
For a saturated holomorphic curve foliation on a compact K\"ahler
manifold with finitely many singularities, every positive directed
$dd^c$-closed current has finite Poincar\'e mass. Thus linearizability
is not needed for the geometric ergodic theorem in this setting.

We construct a projective surface with dense hyperbolic leaves and a
diffuse extremal current whose Nevanlinna averages converge at a uniform
reciprocal logarithmic rate. This is an existence result for a chosen
foliation, not a rate theorem for an arbitrary prescribed foliation.

Under the hypotheses of the first result, every hyperbolic leaf has
a logarithmic lower bound for its Nevanlinna characteristic, even if
no diffuse directed current exists. Positive-dimensional singular sets
are not treated.
\end{abstract}
\maketitle

\section{Introduction}

The Poincar\'e metric relates the intrinsic geometry of a hyperbolic leaf
to the distribution of its image in a compact complex manifold.
Dinh--Sibony's problem list \cite{DSProblems} asks how this relation
behaves near nonlinearizable singularities, whether geometric
equidistribution can be made quantitative, and whether the resulting
growth estimate holds for every leaf. We treat these three questions.
We first recall them with their original numbers, then state our
results and specify the extent of each answer.

\subsection{The three problems}

In the setting of \cite[Section~2]{DSProblems}, let $\F$ be a possibly
singular holomorphic foliation by Riemann surfaces on a compact K\"ahler
manifold $(M,\omega)$. A directed current is a positive current of
bidimension $(1,1)$ tangent to the regular foliation. We use
$dd^c=(i/\pi)\partial\bar\partial$ and
$\|T\|_\omega=\langle T,\omega\rangle$. All currents in our results
are defined on $M$, and their $dd^c$-closedness is understood on $M$.

For a hyperbolic regular leaf $L_a$, choose a universal covering
$\phi_a\colon\D\to L_a$, with $\phi_a(0)=a$. Set
\begin{equation}\label{eq:intro-nevanlinna}
 \mathcal T_a(r)=\int_0^r\frac{dt}{t}
                         \int_{\D(t)}\phi_a^*\omega,
 \qquad
 \tau_r^a=\frac1{\mathcal T_a(r)}
          \int_0^r\frac{dt}{t}(\phi_a)_*[\D(t)].
\end{equation}
The geometric ergodic theorem quoted as Theorem~2.1 in the problem
list states the following. If the singularities are linearizable and
$T$ is an extremal positive directed $dd^c$-closed current of mass one
whose trace gives full mass to hyperbolic regular leaves, then
$\tau_r^a\to T$ for $(T\wedge\omega)$-almost every $a$.
The underlying theorem of Dinh--Nguy\^en--Sibony uses finite Poincar\'e
mass; their linearizable-singularity estimate verifies this hypothesis
\cite[Proposition~4.2 and Theorems~7.1, 7.7]{DNS2012}.

The following are the three questions, reformulated in this notation.
No finiteness assumption on the singular set is inserted into their
statements.

\begin{dsproblem}{2.2(i)}
Does the geometric convergence theorem remain valid when the
singularities are not assumed to be linearizable?
\end{dsproblem}

\begin{dsproblem}{2.2(ii)}
Is there an extremal positive $dd^c$-closed current directed by the
foliation for which the speed of convergence of $\tau_r^a$ against
smooth $(1,1)$-forms can be estimated?
\end{dsproblem}

The problem list suggests currents of positive entropy as candidates
for the second question. This is a suggested approach, rather than an
additional hypothesis in its statement.

The quoted convergence theorem yields, for almost every hyperbolic
leaf, an estimate
\begin{equation}\label{eq:original-growth}
       \mathcal T_a(r)\ge c_a\log\frac1{1-r}
\end{equation}
for $r$ sufficiently close to $1$, where $c_a>0$ may depend on $a$.

\begin{dsproblem}{2.4}
Does \eqref{eq:original-growth} hold for every hyperbolic leaf, including
when there is no diffuse positive directed $dd^c$-closed current?
\end{dsproblem}

Here ``hyperbolic'' makes explicit the disk-covering setting of the
characteristic in the original question. A current is diffuse if it
gives no mass to proper analytic subsets.

\subsection{The three results}

We now state our hypotheses. A holomorphic curve foliation is taken to
be saturated: the quotient of $TM$ by its rank-one tangent sheaf is
torsion-free. In Theorems~\ref{main:mass} and~\ref{main:growth}, the
singular set $E$ is finite. No linearization or nondegeneracy condition
is imposed at its points. On a hyperbolic leaf we normalize the
Poincar\'e form by
\[
 \phi_a^*\omega_{\mathrm P}
       =dd^c[-\log(1-|\zeta|^2)],
\]
and set it equal to zero on nonhyperbolic leaves.

\begin{maintheorem}[Problem 2.2(i)]\label{main:mass}
Let $\F$ be a saturated holomorphic curve foliation with finite singular
set $E$ on a compact K\"ahler manifold $(M,\omega)$. There is a constant
$C=C(M,\omega,\F)$ such that every positive directed $dd^c$-closed
current $T$ satisfies
\[
 \int_{M\setminus E}T\wedge\omega_{\mathrm P}
                      \le C\|T\|_\omega.
\]
If $T\ne0$ is extremal and its trace gives full mass to hyperbolic
regular leaves, then
\[
 \tau_r^a\longrightarrow T/\|T\|_\omega
 \quad\text{for }(T\wedge\omega)\text{-almost every }a.
\]
\end{maintheorem}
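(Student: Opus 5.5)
The plan is to reduce the second assertion to the first and to concentrate all the work on the Poincaré-mass bound. Once $\int_{M\setminus E}T\wedge\omega_{\mathrm P}<\infty$ is known, the geometric ergodic theorem of Dinh--Nguy\^en--Sibony \cite[Theorems~7.1, 7.7]{DNS2012} applies verbatim. Its only use of linearizability is to verify finite Poincaré mass, through \cite[Proposition~4.2]{DNS2012}. So the convergence $\tau_r^a\to T/\|T\|_\omega$ for $(T\wedge\omega)$-a.e.\ $a$ follows for extremal $T$ whose trace charges only hyperbolic regular leaves. For the mass bound, we split $M$ into a compact set $K$ away from $E$ and small balls $B_p$ around each $p\in E$.

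\emph{Step 1 (away from $E$).} On $K$ we use the standard comparison of the Poincaré metric with a fixed smooth leafwise metric. By Brody/Lins Neto type arguments, on compact subsets of $M\setminus E$ the Poincaré metric of hyperbolic leaves is bounded above by $C\omega$ restricted to leaves. Indeed, every point of $K$ is the center of a leafwise disk of uniform size from a flow box, and Schwarz's lemma applies. This gives $\int_K T\wedge\omega_{\mathrm P}\le C\|T\|_\omega$.

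\emph{Step 2 (near a singular point).} Near $p\in E$ the foliation is generated by a holomorphic vector field $X$ with an isolated zero at $p$; saturation ensures the zero set of $X$ is exactly $E$ locally. I would aim for the non-linearizable analogue of the DNS estimate. For a point $x$ in a leaf near $p$, we want
\[
 \omega_{\mathrm P}\le \frac{C\,\omega|_L}{\abs{x}^2\,\log^2\abs{x}}
\]
in local coordinates. To produce it, we flow along $X$ for complex time of size $\sim 1$. Since $|X(x)|\ge c|x|^m$ for some $m$ (a Łojasiewicz inequality at an isolated zero), the leaf through $x$ contains an embedded disk of radius $\gtrsim |x|^{m}$ in $\omega$-length. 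A cruder but usable form of the target estimate is therefore $\omega_{\mathrm P}\lesssim \omega|_L/(|x|^{2m}\log^2|x|)$. The $\log$ factor has to come from a more careful argument: one compares with the punctured-disk metric on the annular region the leaf sweeps in $\{|x|<r_0\}$ under the flow.

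\emph{Step 3 (integrability against $T$).} We need $\int_{B_p} \omega|_L/(\text{weight}) \, dT<\infty$ with a bound by $\|T\|$. Here we use $dd^c$-closedness. Consider the test function $u=\log\abs{x}$, or rather $\log(-\log|x|)$, truncated. Applying $\langle T,dd^c\chi(u)\rangle=0$ and computing $dd^c\chi(u)$ along leaves gives a positive term comparable to $T\wedge i\partial u\wedge\bar\partial u$. This quantity is exactly of size $T\wedge \omega/(|x|^2\log^2|x|)$ in the directions tangent to the leaf. The positive term is then balanced by boundary terms on $\partial B_p$, which are controlled by $\|T\|$. This is the known mechanism showing that directed harmonic currents have finite "Lelong-type" logarithmic mass at isolated singular points.

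The main obstacle is Step 2: getting a Poincaré-metric upper bound whose singular weight is integrable against $T$ by Step 3, without linearization. The Łojasiewicz exponent $m$ gives a weight that is too strong when $m>1$. What I would try first is a blow-up resolution (Seidenberg in dimension two; in general, blowing up $p$ and working along the exceptional divisor). Step 3 is local on $M$, so the bound can be checked on a modification where the pulled-back current lives. The aim is to reduce to reduced singularities or to leaves contained in the exceptional divisor, which carry no diffuse mass. Failing that, I would use a Lyapunov-type function $\log|x|$ along leaves to show that $\log\lvert x\rvert$ restricted to the leaf is comparable to the leafwise distance. That comparison would make the annulus argument of Step 2 work directly.
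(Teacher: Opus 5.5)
\textbf{There is a genuine gap, in Step~2.}

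Your reduction of the convergence statement to \cite[Theorems~7.1, 7.7]{DNS2012} matches the paper, and so does your Step~1. The logarithmic integrability in Step~3 is a known lemma. (The positive term there is really $-dd^c\log(-\log|x|^2)\ge dd^c|x|^2/(|x|^2\log^2|x|^2)$, not $i\partial u\wedge\bar\partial u$ alone, which only sees the radial part of the leaf direction.)

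The gap is Step~2, which you leave open and on which everything rests. You need a pointwise bound $\omega_{\mathrm P}\lesssim\omega|_{T\F}/(|x|^2\log^2|x|)$ at an arbitrary degenerate isolated singularity, and no local comparison can produce it. Take the saddle-node $x^2\partial_x+y\partial_y$. Its plaques are graphs $y=ce^{-1/x}$ over convex regions $\{|x|<\epsilon,\ \operatorname{Re}(1/x)>\log(|c|/\epsilon)\}$ that have $0$ on their boundary. At small real $x$ their Poincar\'e metric is of order $|dx|^2/x^2$, while $\omega|_{T\F}\approx|dx|^2$. So there is no logarithmic gain, and $1/|x|^2$ is exactly the weight your Step~3 cannot integrate.

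Your fallbacks do not close this. Seidenberg reduction leaves saddle-nodes. Reduction of singularities for curve foliations is not available in arbitrary dimension. You would also have to show that the strict transform of $T$ stays $dd^c$-closed. Finally, the logarithm you are after comes from the Gronwall flow disk of complex-time radius $\sim\log(1/|x|)$, since $|X|\le C|x|$. That disk only gives $\omega_{\mathrm P}\lesssim\omega|_{T\F}/(|X(x)|^2\log^2|x|)$, and the factor $|X(x)|^{-2}$ is precisely the {\L}ojasiewicz loss you identified.

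\textbf{The missing idea} is to stop comparing $\omega_{\mathrm P}$ with $\omega$ near $E$. By saturation, the tangent sheaf is a line bundle $L$ with $\iota\colon L\to TM$ vanishing exactly on $E$. The paper fixes a smooth Hermitian metric $\|\cdot\|_0$ on $L$ and sets $\varphi=\log(\|\cdot\|_{\mathrm P}/\|\cdot\|_0)$. The flow-disk estimate $\|e(p)\|_{\mathrm P}\le 2/R(p)$, with $R(p)\sim\log(1/|p-s|)$, then suffices. It shows that $\varphi$ is bounded above and tends to $-\infty$ at $E$, and the degeneracy of the vector field is absorbed into $L$ instead of into a weight.

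Along leaves, $dd^c_{\F}\varphi=\omega_{\mathrm P}-\Theta_0$, where $\Theta_0$ is the smooth curvature form of $\|\cdot\|_0$. The truncations $\max\{\varphi,-n\}$ are constant near $E$ and satisfy $A\omega+dd^c_{\F}\max\{\varphi,-n\}\ge0$. Harmonic disintegration of $T$, leafwise Stokes and $dd^cT=0$ then give positive measures of total mass $A\|T\|_\omega$ that dominate $T\wedge\omega_{\mathrm P}$ on $\{\varphi>-n\}$. Monotone convergence yields $\int_{M\setminus E}T\wedge\omega_{\mathrm P}\le A\|T\|_\omega$. This is a cohomological bound that needs no pointwise integrability near $E$, which is why degenerate singularities cause no difficulty.
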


\begin{maintheorem}[Problem 2.2(ii)]\label{main:rate}
There exist a smooth projective surface $(M,\Omega)$, a nonsingular
holomorphic curve foliation with every leaf dense and hyperbolic, and a
smooth diffuse positive closed current $T$ of mass one, extremal in the
cone of positive directed $dd^c$-closed currents, such that
\[
 \big|\langle\tau_r^a-T,\theta\rangle\big|
       \le\frac{C\|\theta\|_{C^{11}}}{-\log(1-r^2)}
\]
for every smooth $(1,1)$-form $\theta$, every $a\in M$, and $0<r<1$.
The constant depends only on the constructed geometry and a fixed
choice of the $C^{11}$ norm.
\end{maintheorem}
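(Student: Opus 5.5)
The plan is to take $M=(\D\times\D)/\Gamma$ with $\Gamma\subset\mathrm{PSL}_2(\mathbb R)\times\mathrm{PSL}_2(\mathbb R)$ a torsion-free, cocompact, irreducible lattice (for instance one arising from a quaternion algebra over a real quadratic field). Such an $M$ is a smooth projective surface, and its tangent bundle splits. We let $\F$ be the foliation induced by the horizontal disks $\D\times\{w\}$; it is nonsingular.

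\emph{Dense, hyperbolic leaves.} Every leaf is the image of a horizontal disk, so it is a quotient of $\D$ and hence hyperbolic. Irreducibility of $\Gamma$ makes the projection of $\Gamma$ to the second factor dense, so every leaf is dense.

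\emph{The current and the metric.} Take $\Omega=c(\omega_1+\omega_2)$, where $\omega_j$ is the Poincar\'e form of the $j$-th factor. Since $\omega_1,\omega_2$ are $\Gamma$-invariant, $\Omega$ is a well-defined Kähler form on $M$. Set
\[
\langle T,\theta\rangle=c'\int_M\theta\wedge\omega_2 .
\]
This $T$ is smooth and closed. It is positive and directed, because it is leaf integration against the transverse invariant measure $\omega_2$. It is diffuse because it is smooth. The constant $c'$ is chosen so that $T$ has mass one.

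\emph{Extremality.} I would show that $T$ is the unique positive directed $dd^c$-closed current of mass one; uniqueness forces extremality. The argument lifts a directed harmonic current to a leafwise-harmonic measure on $\Gamma\backslash G$, using Garnett's correspondence. Since every leaf has a universal cover with Poincaré metric, Theorem~\ref{main:mass} gives finite Poincaré mass, so the correspondence applies. Such a measure is invariant under the first $\mathrm{PSL}_2(\mathbb R)$ factor. By Moore ergodicity and unique ergodicity for the action of a simple factor on an irreducible quotient, the only such measure is Haar measure, which corresponds to $T$.

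\emph{Reduction to circle averages.} For a smooth $(1,1)$-form $\theta$ write $\theta|_{\text{leaf}}=g\,\omega_1$ with $g\in C^\infty(M)$; then $\langle T,\theta\rangle=\int g\,d\mu$ for the normalized volume $\mu$. Write $\rho$ for the hyperbolic radius of $\D(t)$ and $\sigma$ for hyperbolic radii $\le\rho$. In polar hyperbolic coordinates,
\[
\int_{\D(t)}\phi_a^*\theta=\int_0^{\rho}A_\sigma g(a)\,2\pi\sinh\sigma\,d\sigma ,
\]
where $A_\sigma g(a)$ is the average of $g$ over the hyperbolic circle of radius $\sigma$ centred at $a$ in its leaf. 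On $\Gamma\backslash G$ this circle is the translate $a\,k\,a_\sigma K_1$ of a $K_1$-orbit in the first factor.

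\emph{Key estimate (main obstacle).} The step I expect to be hardest is effective equidistribution of expanding circles, uniformly in the base point:
\[
\bigl|A_\sigma g(a)-\textstyle\int g\,d\mu\bigr|\le C e^{-\delta\sigma}\|g\|_{C^{k}} .
\]
I would prove it by the Margulis thickening argument. One thickens the $K_1$-orbit by a small neighbourhood in the stable and neutral directions and applies exponential decay of matrix coefficients for $\mathrm{PSL}_2(\mathbb R)\times\{1\}$ on $L^2_0(\Gamma\backslash G)$. That decay holds because irreducible lattices have a strong spectral gap, i.e.\ a uniform bound away from the trivial representation on each factor. The thickening radius is then optimized against the Sobolev norm. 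Compactness of $M$ gives a uniform injectivity radius, hence uniformity in $a$. A Sobolev embedding in the $8$-dimensional group converts the Sobolev norm into a $C^{11}$ bound, and $\|g\|_{C^{k}}\lesssim\|\theta\|_{C^{k}}$.

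\emph{Summation.} Set $\Lambda=\int g\,d\mu=\langle T,\theta\rangle$. Integrating the key estimate over $\sigma\le\rho$ gives
\[
\Bigl|\int_{\D(t)}\phi_a^*\theta-\mathrm{area}(\D(t))\,\Lambda\Bigr|\lesssim \|\theta\|_{C^{11}}\,e^{(1-\delta)\rho}\sim \|\theta\|_{C^{11}}(1-t)^{-(1-\delta)} .
\]
The right-hand side is integrable against $dt/t$ on $[0,1)$, at least after handling small $t$ via the trivial bound $\lesssim t^2\|\theta\|_{C^0}$. Hence the numerator of $\tau_r^a$ differs from $\Lambda$ times the total weight by $O(\|\theta\|_{C^{11}})$, uniformly in $r$ and $a$.

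\emph{Normalization.} With $\Omega$ as above, $\phi_a^*\Omega$ is a fixed multiple of the Poincaré form. Hence $\mathcal T_a(r)$ is explicit and comparable to $-\log(1-r^2)$ as $r\to1$. Dividing the $O(\|\theta\|_{C^{11}})$ error by $\mathcal T_a(r)$ gives the stated bound $C\|\theta\|_{C^{11}}/(-\log(1-r^2))$ for $r$ near $1$. For $r$ bounded away from $1$ the bound holds trivially, since $\tau_r^a$ has mass one and $-\log(1-r^2)$ is then bounded.
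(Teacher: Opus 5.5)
Your route is genuinely different from the paper's, and most of it holds up. The irreducible bidisc quotient is projective. Each horizontal leaf is $\D$ modulo a discrete group acting freely, so it is hyperbolic, and it is dense because $p_2(\Gamma)$ is dense. The current $T=c'\omega_2$ is smooth, closed, directed and diffuse. The rate argument goes through once the strong spectral gap you invoke is granted (known for irreducible lattices, Kleinbock--Margulis). That gap gives exponential equidistribution of hyperbolic circles uniformly in the center, and your integration against $dt/t$ is correct. Two minor slips: $\Gamma\backslash G$ is $6$-dimensional, not $8$. Also, with $\Omega=c(\omega_1+\omega_2)$ you get $\pair{T}{\theta}=\Lambda/c$ rather than $\Lambda$; this is compensated by $\mathcal T_a(r)=c\,[-\log(1-r^2)]$, up to your normalization of $\omega_1$.

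\emph{The gap is in the extremality paragraph.} You assert without reason that ``such a measure is invariant under the first $\mathrm{PSL}_2(\mathbb R)$ factor'', and this is not what the correspondence gives. The natural lift of a harmonic measure to the leafwise unit tangent bundle $\Gamma\backslash G/K_2$ is in general invariant only under the upper triangular subgroup $P_1$, i.e.\ under the geodesic and horocycle flows. Invariance under all of $G_1$ would mean the harmonic measure is totally invariant, which is exactly what must be proved. Moore's theorem only says Haar measure is $G_1$-ergodic and tells you nothing about other measures.

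To close the argument along your lines, you need one of two further inputs. The first is unique ergodicity of the horocycle subgroup $U_1$ on the compact quotient $\Gamma\backslash G$, via Ratner or a Furstenberg-type thickening argument using mixing of $a_t$ from Howe--Moore. The second is Mozes' theorem that $P_1$-invariant probability measures are $G_1$-invariant.

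The simpler repair is to drop uniqueness, since extremality does not need it. If $T=T_1+T_2$, then $T_j=h_jT$ with $0\le h_j\le 1$. The condition $\ddc(h_j\omega_2)=0$ says $\int_M h_j\,\Delta_1\psi\,\omega_1\wedge\omega_2=0$ for all smooth $\psi$, where $\Delta_1$ is the leafwise Laplacian. Expand $h_j$ in joint eigenfunctions of the commuting operators $\Delta_1,\Delta_2$. Only eigenfunctions with $\Delta_1e=0$ survive. These are smooth and leafwise harmonic, hence constant by the maximum principle on a dense leaf. This is the exact analogue of the paper's Fourier argument with twisted Laplacians.

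\emph{Comparison.} The paper uses $S\times A$ with translation holonomy. Extremality there reduces to the absence of parallel sections of nontrivial flat unitary line bundles. The rate comes from a bounded global solution of $dd^c_\F u_\theta=(f_\theta-c)\omega_S$ plus Jensen's formula, giving an exact identity whose numerator is at most $2\norm{u_\theta}_\infty$. The only difficulty is small divisors. These are handled by an elementary algebraic-number Diophantine bound at the cost of derivatives, which is why $C^{11}$ appears.

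Your route avoids small divisors but imports deep homogeneous-dynamics inputs: irreducible lattices, the strong spectral gap, and exponential mixing. In return it gives a much stronger intermediate result, exponential equidistribution of individual circles, and an example with nonabelian dense holonomy. However, that gain is washed out by the $dt/t$ weighting, and the final rate is the same $1/[-\log(1-r^2)]$. In fact, in your setting the strong spectral gap directly yields a bounded solution of $\Delta_1u=g-\int g\,d\mu$ with no derivative loss. So the paper's Jensen step would finish the rate immediately and make the thickening argument unnecessary.
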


\begin{maintheorem}[Problem 2.4]\label{main:growth}
Let $\F$ satisfy the hypotheses on the foliation in
Theorem~\ref{main:mass}. There is a constant $c_0>0$, independent of
the hyperbolic leaf and its center, such that for every such center $a$
there is $C_a\ge0$ with
\[
 \mathcal T_a(r)\ge c_0[-\log(1-r^2)]-C_a,
 \qquad 0<r<1.
\]
In particular, \eqref{eq:original-growth} holds for every hyperbolic
leaf. No existence assumption on a diffuse directed current is needed.
\end{maintheorem}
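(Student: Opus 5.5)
We reduce Theorem~\ref{main:growth} to a uniform lower bound for the ratio $\phi_a^*\omega/\phi_a^*\omega_{\mathrm P}$ away from $E$, plus a control of the time a leaf can spend near $E$. Write $\phi_a^*\omega=\eta\,\phi_a^*\omega_{\mathrm P}$ with $\eta\ge0$ on $\D$. On the Poincar\'e side,
\[
\int_0^r\frac{dt}{t}\int_{\D(t)}\phi_a^*\omega_{\mathrm P}
=\int_0^r\frac{dt}{t}\frac{t^2}{1-t^2}=\tfrac12[-\log(1-r^2)].
\]
So it suffices to prove that $\eta$ is bounded below by a uniform constant on a set that carries a uniform fraction of the Poincar\'e area of every disk $\D(t)$, up to an error bounded in $t$.

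The first step is the classical one. On a compact set $K\subset M\setminus E$, the Poincar\'e metric of leaves is bounded above by a multiple of $\omega|_{\F}$. This follows from the Schwarz lemma applied to foliated flow boxes of uniform size, or equivalently by Brunella-type continuity of the Poincar\'e metric on hyperbolic leaves. Hence $\eta\ge c_K>0$ on $\phi_a^{-1}(K)$, and near $E$ only the lower bound $\eta\ge0$ is available.

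The second step, which is the main point, is to show that a hyperbolic leaf cannot spend most of its Poincar\'e area inside a small neighborhood $U_\epsilon$ of the finite set $E$. Here we will reuse the local analysis behind Theorem~\ref{main:mass}. Near each $p\in E$, saturation and isolatedness give a local generating vector field $v$ with an isolated zero. The estimate giving finite Poincar\'e mass is really a local integral bound for $\omega_{\mathrm P}$ against $\omega$ near $p$, of the form $\omega_{\mathrm P}\lesssim \omega/(|x|^2\log^2|x|)$ along leaves, or an averaged version of it. The plan is to convert this into a \emph{capacity} statement: for Poincar\'e disks $\D(t)$, the area of $\{\zeta\in\D(t):\phi_a(\zeta)\in U_\epsilon\}$ is at most $\delta(\epsilon)\cdot\mathrm{Area}_{\mathrm P}(\D(t))+C$, with $\delta(\epsilon)\to0$.

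We would try to prove this with a subharmonic test function. Let $u=\log\log(1/\dist(\cdot,E))$ near $E$, extended smoothly. Along leaves, $dd^c(u\circ\phi_a)$ is bounded below by $-A\,\phi_a^*\omega$. Moreover, near $E$ it is bounded below by a positive multiple of the Poincar\'e form of a punctured disk, via the local vector field. Applying Jensen's formula on $\D(t)$ and integrating $dt/t$ bounds the $\log$-averaged area near $E$ by $A\,\mathcal T_a(r)+O(\log\log)$ boundary terms. This is the step we expect to be the main obstacle. The difficulty is that at a nonlinearizable, possibly degenerate, isolated singularity the comparison between $dd^c u$ and $\omega_{\mathrm P}$ on leaves is not explicit. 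We would first try to get it by comparing with the punctured-disk metric on leaves of the local vector field, through the flow-time coordinate. We would fall back on a blow-up/Seidenberg reduction argument, since $E$ is finite, if that fails.

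Combining the two steps gives
\[
\tfrac12[-\log(1-r^2)]\le c_K^{-1}\mathcal T_a(r)+\delta\cdot\tfrac12[-\log(1-r^2)]+A'\mathcal T_a(r)+C_a.
\]
Choosing $\epsilon$ so that $\delta<1/2$ yields the bound with $c_0$ depending only on $(M,\omega,\F)$. The constant $C_a$ absorbs the boundary terms at $\zeta=0$, which depend only on $\dist(a,E)$.
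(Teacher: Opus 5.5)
Your Step~2 carries the whole difficulty and is left unproved, and the sketch you give for it does not work as written. First, the test function has the wrong sign. With $s=-\log|z-p|$ one has $dd^c\log s=dd^c s/s-\frac{i}{\pi}\partial s\wedge\bar\partial s/s^2\le0$ near $p$. On a leaf of the radial field it equals $-\frac{i}{4\pi}\frac{d\zeta\wedge d\bar\zeta}{|\zeta|^2\log^2|\zeta|}$. So $u=\log\log(1/\dist(\cdot,E))$ is neither $\ge -A\omega$ along leaves nor $\ge c\,\omega_{\mathrm P}$ near $E$. Also, since $u\to+\infty$ at $E$, the circle means in Jensen's formula are not bounded above.

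Replacing $u$ by $-u$ fixes the sign, but the comparison you then need near $E$ is exactly the one that breaks down. You get $dd^c_\F(-u)(v,\bar v)\gtrsim |v|^2/(|z-p|^2s^2)$. The flow-disk Schwarz--Pick argument only gives $\|e\|_{\mathrm P}^2\lesssim 1/s^2$ for the frame $e$ with $\iota(e)=v$. This closes when $|v(z)|\gtrsim|z-p|$, i.e.\ for a nondegenerate linear part, but not at degenerate singularities, which are precisely the new case. For the same reason your pointwise bound $\omega_{\mathrm P}\lesssim\omega/(|x|^2\log^2|x|)$ is not available: $\|v\|_\omega$ may be much smaller than $|x|$.

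The Seidenberg fallback exists only on surfaces, whereas the theorem is stated in every dimension. Moreover, if you had a Jensen bound of the log-averaged Poincar\'e area near $E$ by $A'\mathcal T_a(r)+C_a$, the separate uniform smallness $\delta(\epsilon)\to0$ would be superfluous; as stated, it is an extra unproved claim.

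The missing idea is to compare the Poincar\'e metric not with $\omega|_{T\F}$, which degenerates at $E$ as a metric on the tangent line bundle, but with a smooth Hermitian metric $\|\cdot\|_0$ on that line bundle $L$ itself (locally free by saturation). The paper sets $\varphi=\log(\|\cdot\|_{\mathrm P}/\|\cdot\|_0)$. On every hyperbolic leaf, $dd^c_\F\varphi=\omega_{\mathrm P}-\Theta_0$, where $\Theta_0=dd^c\log\|e\|_0$ is smooth on all of $M$, including $E$. Moreover $\varphi\le B$ on $M\setminus E$: away from $E$ by your Step~1, and near $E$ by $\|e(p)\|_{\mathrm P}\le 2/R(p)$. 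Jensen's formula on $\D(r)$ then gives at once $-\log(1-r^2)\le B-\varphi(a)+A\mathcal T_a(r)$ with $\Theta_0\le A\omega$, which is the theorem with $c_0=A^{-1}$.

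No splitting into near and far regions and no lower bound on any Levi form near $E$ is needed. Only an upper bound on $\|e\|_{\mathrm P}$ enters, and that bound is insensitive to degeneracy. A minor point: with the paper's normalization $\int_{\D(t)}\omega_{\mathrm P}=2t^2/(1-t^2)$, so the Poincar\'e side equals $-\log(1-r^2)$, not half of it; this is harmless.
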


\subsection{Extent of the answers}

\emph{Problem~2.2(i).}
Theorem~\ref{main:mass} removes linearizability throughout the class of
saturated foliations with finite isolated singular sets, including
degenerate singularities. On compact complex surfaces, saturation makes
the singular set finite, so no further restriction on its dimension is
needed there. In higher ambient dimension, the singular set may have
positive dimension. The original Section~2 does not explicitly exclude
that possibility, and our theorem does not treat it. Thus the answer
to this part is stated in the isolated singularity class.

\emph{Problem~2.2(ii).}
Theorem~\ref{main:rate} constructs both $\F$ and $T$. It gives an
affirmative example for the existence interpretation of the question,
with the additional properties that $T$ is diffuse, every leaf is
dense, and the estimate is uniform in the center. It does not prove
the stronger assertion that, for every foliation fixed beforehand in
the setting of the convergence theorem, one can find a current with
an explicit rate. We retain this distinction because the original
question refers to a current directed by the given foliation.
Our surface is projective but is not a projective space; the constructed
current is closed and the transverse holonomy consists of translations.

\emph{Problem~2.4.}
Theorem~\ref{main:growth} covers every hyperbolic leaf, with no
almost-everywhere exception, and applies even in the absence of diffuse
directed currents. It answers the question in the isolated linearizable
setting inherited from the preceding convergence theorem and extends
it to arbitrary isolated singularities. In the convention used in
\cite[Section~4]{DNS2012}, a linearizable singularity is modeled by a
diagonal linear vector field with all eigenvalues nonzero; it is
therefore isolated. If Problem~2.4 is instead read in the unrestricted
singular setting of the opening paragraph of Section~2, the additional
case of a positive-dimensional singular set is not settled here.

These are the precise senses in which the paper gives three answers.
We do not assert that all three questions are settled under every
possible interpretation of their ambient hypotheses and quantifiers.

\subsection{Earlier work and proof strategy}

Finite Poincar\'e mass at linearizable isolated singularities was proved
by Dinh--Nguy\^en--Sibony \cite[Proposition~4.2]{DNS2012}.
The local metric estimates of Lins Neto and Canille Martins
\cite[Theorem~3(a)]{CMLN2004}, together with the mass criterion in
\cite[Proposition~4.5]{BacherHeat}, also cover nondegenerate
singularities without a linearization assumption. Thus nonlinearizability
alone is not the distinction in Theorem~\ref{main:mass}. Its finite-mass
assertion includes arbitrary degenerate isolated singularities and does
not assume Brody hyperbolicity. Its convergence conclusion is an
application of the published geometric ergodic theorem
\cite[Theorems~7.1 and~7.7]{DNS2012}.

The suspension in Theorem~\ref{main:rate} also has a geometric
precedent. Matsumoto \cite[Example~4.1]{Matsumoto2016} proves
qualitative equidistribution of Poincar\'e disks for a class of
homogeneous foliations that includes dense torus-translation
suspensions. Moreover, the leafwise differential in our construction
is a real tube structure: partial Fourier series and Diophantine
conditions for its global solvability are studied by Ara\'ujo,
Ferra, Jahnke, and Ragognette \cite{AFJR2024}. We give an explicit
quantitative application of this approach to Nevanlinna currents on a
projective product. The scalar Poisson inverse is estimated in a fixed
$C^{11}$ norm, and the resulting reciprocal-logarithmic bound is uniform
in the center. The cited first-order closed-range theorem does not
itself state this scalar inverse estimate or this rate.

For Theorem~\ref{main:growth}, the nonsingular case is already known:
Dinh--Sibony \cite[Section~2]{DSProblems} explicitly attribute the
every-leaf logarithmic lower bound on compact nonsingular laminations
to Forn\ae ss--Sibony \cite{FS2008}. They distinguish this from the
invariant-line setting of \cite{DSInvariant}, where the characteristic
is known to be unbounded but the stated logarithmic estimate is not
provided. Theorem~\ref{main:growth} extends the every-leaf lower bound
to the finite isolated singularity setting, including degenerate
singularities. It is a consequence of the intrinsic metric estimate
and Jensen's formula, independent of the existence of a directed
current and of the almost-everywhere convergence theorem.

The intrinsic metric argument has an explicit antecedent in
Brunella \cite[Proposition~6.2]{Brunella}: long complex-time flow disks
near a zero of the defining vector field force its Poincar\'e norm to
tend to zero. We use this observation for ordinary regular leaves,
without invoking the completed-leaf framework used in his global
uniformization theory. A related use of singular metric potentials
appears in Nguy\^en's cohomological formula for Poincar\'e mass
\cite[Theorem~1.2 and Section~5]{NguyenMass}, for diffuse extremal
currents on Brody-hyperbolic projective surfaces with hyperbolic
singularities. Those hypotheses do not cover arbitrary degenerate
isolated singularities.

Here the logarithm of the ratio of the intrinsic norm to a smooth
reference norm is Borel and smooth along each hyperbolic leaf. Its
bounded truncations are constant near the singular set. Harmonic
disintegration and leafwise Stokes then give Theorem~\ref{main:mass},
whereas Jensen's formula gives Theorem~\ref{main:growth} directly.
For Theorem~\ref{main:rate}, Jensen's formula is applied instead to a
bounded Poisson potential; the characteristic is exactly
$-\log(1-r^2)$. Leafwise potentials thus connect the three arguments,
while their conclusions concern finite mass, a quantitative example,
and growth on every leaf, respectively.

We prove Theorems~\ref{main:mass}, \ref{main:rate}, and~\ref{main:growth}
in Sections~\ref{sec:finite-poincare-mass}, \ref{sec:rate}, and
\ref{sec:every-leaf}, respectively. Section~\ref{sec:further-cases}
formulates open problems suggested by the cases not settled here.

\section{Finite Poincar\'e mass and isolated singularities}
\label{sec:finite-poincare-mass}

Throughout this section, $(M,\omega,\F,E)$ satisfies the hypotheses
of Theorem~\ref{main:mass}. A current of bidimension $(1,1)$ has
bidegree $(m-1,m-1)$ in ambient dimension $m$. The equation $dd^cT=0$
is understood across $E$, not merely on its complement.

On a hyperbolic regular leaf, write $\|\cdot\|_{\mathrm P}$ for the
curvature $-1$ Poincar\'e length.  We normalize its associated leafwise
form by requiring that on the universal covering disk
\begin{equation}\label{eq:fm-poincare-normalization}
 \omega_{\mathrm P}
 =\frac{i}{\pi}\frac{d\zeta\wedge d\bar\zeta}
                         {(1-|\zeta|^2)^2}
 =dd^c[-\log(1-|\zeta|^2)].
\end{equation}
Thus $\omega_{\mathrm P}$ is $1/(2\pi)$ times the usual curvature $-1$
area.  Put both the Poincar\'e pseudonorm and this form equal to zero on
nonhyperbolic leaves.  The leafwise form is Borel and locally bounded on
the regular locus \cite[Proposition~3.1]{DNS2012}.

\begin{lemma}\label{lem:intrinsic-poincare-potential}
The tangent sheaf of $\mathcal F$ is a holomorphic line bundle $L$,
with a holomorphic morphism $\iota:L\to TM$ whose zero set is $E$.
Choose a smooth Hermitian length norm $\|\cdot\|_0$ on $L$ and set
\[
 \varphi=\log\frac{\|\cdot\|_{\mathrm P}}{\|\cdot\|_0}
\]
on hyperbolic regular leaves, using $\iota$ to define the numerator.
Put $\varphi=-\infty$ on nonhyperbolic leaves and on $E$.
Then $\varphi$ is Borel, bounded above, and tends uniformly to
$-\infty$ at every point of $E$.  If $e$ is a local nonvanishing
holomorphic frame of $L$, the forms
\[
                 \Theta_0=dd^c\log\|e\|_0
\]
define a global smooth real $(1,1)$-form.  On each hyperbolic leaf,
\begin{equation}\label{eq:fm-curvature}
        dd^c_{\mathcal F}\varphi
          =\omega_{\mathrm P}-\Theta_0|_{T\mathcal F}.
\end{equation}
\end{lemma}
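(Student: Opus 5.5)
We will establish the four assertions of the lemma in turn: the line bundle structure, the Borel measurability and upper bound of $\varphi$, its uniform decay to $-\infty$ near $E$, and the curvature identity \eqref{eq:fm-curvature}.

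\emph{Line bundle structure.} Saturation means $TM/\mathcal{T}_\F$ is torsion-free. Hence the tangent sheaf $\mathcal T_\F$ is reflexive of rank one, and therefore a line bundle $L$. The inclusion $\iota\colon L\to TM$ is locally given by $e\mapsto v$, where $v$ is a holomorphic vector field generating $\F$. Saturation also forces the zeros of $v$ to be exactly the singular points, so the zero set of $\iota$ is $E$. Since transition functions of $L$ are nonvanishing holomorphic functions $g$, we have $\log\|ge\|_0=\log|g|+\log\|e\|_0$. As $\log|g|$ is pluriharmonic, $\Theta_0=dd^c\log\|e\|_0$ is independent of the frame and defines a global smooth real $(1,1)$-form.

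\emph{Borel measurability and upper bound.} Borel measurability of $\varphi$ follows from the Borel measurability of the Poincaré pseudonorm, which is the metric counterpart of \cite[Proposition~3.1]{DNS2012}. For the upper bound, we compare $\varphi$ with the Kobayashi-type bound. Away from $E$ we use flow-box disks of uniform size in a fixed finite atlas. The holomorphic disk of leafwise radius $\delta$ centered at $x$ bounds the Poincaré norm by $C/\delta$ times the smooth norm. This gives $\varphi\le C$ outside a neighborhood $U$ of $E$.

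\emph{Decay at $E$.} This step is the main obstacle. Near $p\in E$, write the foliation as generated by a vector field $v$ with $v(p)=0$, with $\iota(e)=v$. For $x$ near $p$, the complex-time flow $t\mapsto \exp(tv)(x)$ is defined on a disk $|t|<R(x)$ while staying in a fixed ball $B$. Since $|v(y)|\le K\,|y-p|$ on $B$, Gronwall's inequality gives $R(x)\ge c\log(r_0/|x-p|)$, which tends to $\infty$ as $x\to p$. This map is a holomorphic map of a disk into the leaf of $x$ with derivative $v(x)$ at $0$. The Schwarz lemma then gives $\|v(x)\|_{\mathrm P}\le 2/R(x)$. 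Writing $\|e\|_0$ as comparable to $1$ near $p$, we obtain
\[
\varphi(x)\le \log\frac{C}{\log(r_0/|x-p|)}\longrightarrow-\infty,
\]
uniformly in $x$. This is Brunella's observation \cite[Proposition~6.2]{Brunella}. The flow disk may hit $E$ only at $p$ itself, which it never reaches in finite time, so the disk stays in the regular leaf. Combined with the bound outside $U$, this gives the global upper bound. The care needed is to verify that the flow disk stays in $B\setminus\{p\}$. This follows from $|y(t)-p|\le |x-p|e^{K|t|}$ together with a lower bound $|y(t)-p|\ge |x-p|e^{-K|t|}>0$.

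\emph{Curvature identity.} On a hyperbolic leaf, pull back by $\phi_a$. The form $\phi_a^*\omega_{\mathrm P}=dd^c[-\log(1-|\zeta|^2)]$ corresponds to the Poincaré norm of $\partial_\zeta$, namely $\|\partial_\zeta\|_{\mathrm P}=c/(1-|\zeta|^2)$. Write $\partial_\zeta=h\,e$ locally with $h$ holomorphic and nonvanishing. Then
\[
\varphi=\log\|\partial_\zeta\|_{\mathrm P}-\log|h|-\log\|e\|_0 .
\]
Applying leafwise $dd^c$, the $\log|h|$ term is harmonic and drops out, while
\[
dd^c\log\frac{c}{1-|\zeta|^2}=dd^c[-\log(1-|\zeta|^2)]=\omega_{\mathrm P}.
\]
This yields \eqref{eq:fm-curvature}.
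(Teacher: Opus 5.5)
Your proposal is correct and follows essentially the same route as the paper: reflexivity of the saturated rank-one tangent sheaf, measurability and the upper bound away from $E$ via the Dinh--Nguy\^en--Sibony local boundedness result and a finite flow-box cover, Gronwall plus Schwarz--Pick for complex-time flow disks of radius comparable to $\log(1/|x-p|)$ near each singular point, and the direct computation $dd^c\log\|\partial_\zeta\|_{\mathrm P}=\omega_{\mathrm P}$ on the covering disk. The differences are cosmetic. The paper justifies $\mathscr L=\mathscr L^{**}$ explicitly, since the torsion sheaf $\mathscr L^{**}/\mathscr L$ embeds in the torsion-free $TM/\mathscr L$. It also rules out the flow reaching $p$ by uniqueness of the constant solution, rather than by your reverse Gronwall bound $|y(t)-p|\ge|x-p|e^{-K|t|}$.
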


\begin{proof}
Let $\mathscr L\subset TM$ be the saturated rank-one tangent sheaf.
The inclusion induces an injection $\mathscr L^{**}\to TM$; its kernel
vanishes because it vanishes generically and $\mathscr L^{**}$ is
torsion-free.  The quotient $\mathscr L^{**}/\mathscr L$ is torsion and
embeds into $TM/\mathscr L$, which is torsion-free by saturation.
Thus $\mathscr L=\mathscr L^{**}$.  A rank-one reflexive sheaf on a
smooth complex manifold is locally free, since the regular local rings
are factorial.  This proves the first assertion.

Fix $s\in E$, a coordinate ball centered at $s$, and a local frame $e$.
The vector field $v=\iota(e)$ vanishes at $s$, so after shrinking the
ball,
\[
                      |v(z)|\le C_0|z-s|.
\]
For $p$ sufficiently close to $s$, the holomorphic flow $\Phi_p$ of $v$
through $p$ exists on a complex time disk of radius
\[
             R(p)=\frac{1}{2C_0}\log\frac{\delta}{|p-s|}.
\]
Indeed, along each radial time segment, Gronwall's inequality gives
$|\Phi_p(t)-s|\le |p-s|e^{C_0|t|}$.  For $|t|<R(p)$ this remains
strictly inside a fixed smaller coordinate ball.  More explicitly, solve $\gamma'(\sigma)=t v(\gamma(\sigma))$, $0\le\sigma\le1$,
with $\gamma(0)=p$, treating $t$ as a complex parameter. The same estimate keeps
the solution in a compact subball for $|t|<R(p)$; holomorphic dependence
on $t$ defines a single holomorphic flow map on that disk.  It cannot meet $s$: uniqueness at such a time would make it the
constant solution.  The image consequently lies in the uncompleted
regular leaf through $p$.

If this leaf is hyperbolic, lift $\Phi_p$ to its universal covering disk
and apply Schwarz--Pick at zero.  Since $\Phi_p'(0)=v(p)$,
\begin{equation}\label{eq:fm-flow-bound}
                     \|e(p)\|_{\mathrm P}\le\frac{2}{R(p)}.
\end{equation}
No injectivity of the flow disk is needed.  On a nonhyperbolic leaf the
left side is zero by convention.  The reference norm of $e$ is bounded
below, so \eqref{eq:fm-flow-bound} proves uniform decay of $\varphi$ at
$s$.  The local boundedness and measurability of the Poincar\'e form,
together with a finite flow-box cover of the complement of fixed singular
balls, prove the global upper bound and Borel measurability.

Frame changes add $\log|g|$, with $g$ nowhere zero and holomorphic, to
$\log\|e\|_0$; hence $\Theta_0$ is well defined.  On the covering disk,
\[
 dd^c\log\|\partial_\zeta\|_{\mathrm P}
 =dd^c\log\frac{2}{1-|\zeta|^2}
 =\omega_{\mathrm P}.
\]
Subtracting the corresponding identity for the reference norm gives
\eqref{eq:fm-curvature}.
\end{proof}

We next record the integration statement needed because $\varphi$ need
not be continuous transversely.

\begin{lemma}\label{lem:bounded-leafwise-stokes}
Let $A\ge0$ and let $T$ be a positive directed $dd^c$-closed current.
Let $u$ be a
bounded Borel function on $M$, continuous along regular leaves, equal
to one constant on an ambient neighborhood of $E$.  Suppose that
$A\omega|_{T\mathcal F}+dd^c_{\mathcal F}u\ge0$ on every regular
plaque, in the distributional sense.  Then the local measures
\[
           T\wedge(A\omega+dd^c_{\mathcal F}u)
\]
extend and glue to a positive measure $\mu_u$ on $M$, with
\begin{equation}\label{eq:fm-exact-mass}
                     \mu_u(M)=A\langle T,\omega\rangle.
\end{equation}
\end{lemma}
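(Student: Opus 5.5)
The plan is to work in flow boxes, disintegrate $T$ there as a superposition of harmonic multiples of plaques, and then get the mass identity by a leafwise Stokes argument on a partition of unity. I use the standard local structure of directed $dd^c$-closed currents away from $E$ (Dinh--Nguy\^en--Sibony). In a flow box $U\simeq\D\times\Sigma$ disjoint from a small neighborhood $W$ of $E$ on which $u\equiv c$, one has
\[
 T=\int_\Sigma h_\alpha[\D\times\{\alpha\}]\,d\nu(\alpha),
\]
with $h_\alpha\ge0$ harmonic on the plaques and $\nu$ a positive measure on the transversal. I then define the local measure by
\[
 T\wedge(A\omega+dd^c_{\mathcal F}u)=\int_\Sigma h_\alpha\,(A\omega+dd^c_{\mathcal F}u)|_{\D\times\{\alpha\}}\,d\nu(\alpha).
\]
This is well defined and positive. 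On each plaque, $A\omega+dd^c_{\mathcal F}u$ is a positive distribution, hence a positive Radon measure, so multiplying it by the continuous $h_\alpha\ge0$ makes sense. Measurability in $\alpha$ follows from $u$ being bounded Borel: test against $\chi$ and write $\int h_\alpha\, dd^c\chi'\,u$. Local finiteness comes from integrating by parts against a cutoff, which gives bounds in terms of $\sup|u|$ and $\int h_\alpha\,d\nu$.

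Next I check gluing. On the overlap of two flow boxes the disintegrations agree up to reparametrization, since the decomposition is unique up to the usual normalization. The leafwise measures are intrinsic, so the local measures coincide on overlaps. Near $E$, i.e. on $W$, I set $\mu_u:=A\,T\wedge\omega$, which is positive. This agrees with the flow-box definition on $W\setminus E$ because $dd^c_{\mathcal F}u=0$ there. The conclusion is a positive measure $\mu_u$ on all of $M$, and it gives no extra mass to $E$ beyond $A\,T\wedge\omega$. This is where the hypothesis that $u$ is constant near $E$ is used.

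For \eqref{eq:fm-exact-mass}, it suffices to show $\int_M dd^c_{\mathcal F}u\wedge T=0$ in the glued sense. I take a partition of unity $\{\chi_j\}$ subordinate to a cover consisting of $W$ and finitely many flow boxes, with $\sum\chi_j=1$, and write
\[
 \mu_u(M)-A\langle T,\omega\rangle=\sum_j\int\chi_j\,T\wedge dd^c_{\mathcal F}u.
\]
The $W$ term vanishes. In a flow box, leafwise integration by parts gives
\[
 \int\chi_j h_\alpha\,dd^c u=\int u\,dd^c(\chi_j h_\alpha)
\]
on each plaque, valid since $u$ is continuous on plaques and $\chi_j h_\alpha$ is smooth with compact support. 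Integrating in $\nu$ yields $\langle T,u\,dd^c\chi_j\rangle$, where $u$ is understood as a bounded Borel function paired with the measure-coefficient current. This uses $dd^c h_\alpha=0$ and that the cross terms $d h_\alpha\wedge d^c\chi_j$ combine to $dd^c(\chi_j h_\alpha)-h_\alpha dd^c\chi_j$. More cleanly, I write $\langle T,u\,dd^c\chi_j\rangle$ as $\int u\,dd^c(\chi_j h_\alpha)\,d\nu$. Summing over $j$, the quantity $\sum_j dd^c(\chi_j h_\alpha)$ is locally $dd^c h_\alpha=0$, so I get $\langle T, u\, dd^c(\textstyle\sum\chi_j)\rangle$ restricted to where $u$ need not be constant, which vanishes since $\sum\chi_j\equiv1$. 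The $W$ term contributes $c\langle T,dd^c\chi_W\rangle$. The combined identity is
\[
 \sum_j\langle T,u\,dd^c\chi_j\rangle=\langle T,\,u\,dd^c 1\rangle=0,
\]
and this is justified because $T$ has measure coefficients and $u$ is bounded Borel. Then $dd^cT=0$ on $M$, including across $E$, handles the term $c\langle T,dd^c\chi_W\rangle$ together with the rest.

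The main obstacle is the pairing $\langle T,u\,dd^c\chi\rangle$ with merely Borel $u$. The identity $\sum_j\int u\,dd^c(\chi_j h_\alpha)\,d\nu=\int u\sum_j(\ldots)$ requires care, because the $h_\alpha$ differ from box to box. I would handle this by approximating $u$ in $L^1(\|T\|)$ by leafwise-smooth truncations, or by fixing the disintegration chart-by-chart and using only the global distributional identity $\langle T,dd^c(u\chi)\rangle$ for leafwise-$C^2$ approximants $u_\epsilon$. Concretely, I would leafwise-convolve $u$ inside each flow box and use dominated convergence, where the dominating bound is $\sup|u|$.
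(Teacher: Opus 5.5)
Your construction of $\mu_u$ and your treatment of $W$ match the paper. For local finiteness you do need the interior Harnack and gradient estimates, which bound the variation of $dd^c(\chi h_\alpha)$ on $P_\alpha$ by the $h_\alpha$-mass of a slightly larger plaque.

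The genuine gap is in the mass identity. Integrating $\int_{P_\alpha}u\,dd^c(\chi_jh_\alpha)$ against $\nu$ does not give $\langle T,u\,dd^c\chi_j\rangle$. The two differ by $\int_\Sigma\int_{P_\alpha}u\,(d\chi_j\wedge d^ch_\alpha+dh_\alpha\wedge d^c\chi_j)\,d\nu$, which is nonzero for general $u$. So your justification (``$T$ has measure coefficients and $u$ is bounded Borel'') shows that some quantity cancels, but not the one you need. Once you redefine the bracket as the plaque integral, the expression $\sum_j dd^c(\chi_jh_\alpha)$ has no meaning, because $h_\alpha$ and $\nu$ change from box to box. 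Your proposed remedy does not repair this:
\begin{itemize}
\item Leafwise convolutions of $u$ in different boxes are different functions, so no single global identity applies to them.
\item An identity such as $\langle T,dd^c(u_\epsilon\chi)\rangle=0$ requires ambient $C^2$ regularity, but leafwise smoothing leaves the transverse dependence Borel.
\end{itemize}
A Stokes formula for such transversally Borel functions is exactly what the lemma is meant to prove, so appealing to it is circular.

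The missing observation is that each flow-box term is intrinsic. For a flow-box index $j$ and smooth $g$,
\[
\langle dd^c(\chi_jT),g\rangle=\langle \chi_jT,dd^cg\rangle=\int_\Sigma\int_{P_\alpha}g\,dd^c(\chi_jh_\alpha)\,d\nu .
\]
Together with the Harnack bound, this shows that $\lambda_j:=dd^c(\chi_jT)$ is a signed Radon measure. It also shows that $\int_\Sigma\int_{P_\alpha}u\,dd^c(\chi_jh_\alpha)\,d\nu=\int u\,d\lambda_j$ for every bounded Borel $u$, whichever disintegration is used.

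Finitely many measures can be added, so $\mu_u(M)-A\langle T,\omega\rangle=\int u\,d\lambda$, where
\[
\lambda=\sum_j\lambda_j=dd^c((1-\chi_W)T)=-dd^c(\chi_WT),
\]
the last step by $dd^cT=0$. Hence $\lambda$ is supported in the support of $d\chi_W$, which lies inside $W$ where $u\equiv c$. Also $\lambda(M)=\langle(1-\chi_W)T,dd^c1\rangle=0$, so $\int u\,d\lambda=c\,\lambda(M)=0$.

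This is the paper's argument, and it needs no approximation of $u$. The same intrinsic formula $A\langle T,\psi\omega\rangle+\int u\,d(dd^c(\psi T))$ also gives the gluing on overlaps, without invoking uniqueness of the disintegration.
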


\begin{proof}
In a regular flow box, harmonic disintegration gives
$T=\int h_t[P_t]\,d\nu(t)$, where the $h_t$ are nonnegative harmonic
functions \cite[Proposition~2.3]{DNS2012}.  On a smaller box choose a
smooth leafwise potential $q$ for $A\omega$.  The functions $u_t+q_t$
are bounded and subharmonic.  Convolution in the leaf coordinate defines
their Riesz measures and gives, for a smooth compactly supported test
function $\psi$,
\begin{align}
 \langle\mu_u,\psi\rangle
 &=\int\left(A\int_{P_t}\psi h_t\omega
              +\int_{P_t}u_t\,dd^c(\psi h_t)\right)d\nu(t)
                                                        \label{eq:fm-local-wedge}\\
 &=A\langle T,\psi\omega\rangle
       +\int u\,d\bigl(dd^c(\psi T)\bigr).
                                                        \label{eq:fm-intrinsic-wedge}
\end{align}
Here $dd^c(\psi T)$ is a signed Radon measure.  This follows either
directly from the plaque formula, or from normality of $T$ and $dT$ in
Proposition~2.3, the product rule, and $dd^cT=0$.  The interior
Harnack and gradient estimates bound the variation of
$dd^c(\psi h_t)$ by a fixed multiple of the $h_t$-mass on a slightly
larger plaque.  This bound is integrable in $t$.  It justifies dominated
convergence after leafwise convolution and shows that
\eqref{eq:fm-local-wedge} is a positive, locally finite measure.  No
transverse derivative of $u$ occurs.  Formula
\eqref{eq:fm-intrinsic-wedge} is independent of the disintegration, so
the local measures agree on overlaps.

On a neighborhood where $u$ is constant, define $\mu_u=A\,T\wedge\omega$.
This agrees with the regular definition on the punctured overlap.  Choose
a smooth partition of unity $\chi_0+\sum_{j=1}^N\chi_j=1$ with
$\chi_0=1$ near $E$, its support contained in the constant neighborhood,
and each $\chi_j$, $j\ge1$, supported in a regular flow box.  Set
\[
 \lambda=\sum_{j=1}^Ndd^c(\chi_jT)
        =-dd^c(\chi_0T).
\]
The left expression shows that $\lambda$ is a signed Radon measure;
the right expression and $dd^cT=0$ show that it is supported in the
regular region where $\chi_0$ varies.  Moreover,
$\lambda(M)=0$ by compact Stokes.  Since $u$ is constant on this support,
\[
 \mu_u(M)-A\langle T,\omega\rangle
       =\int u\,d\lambda=0.
\]
This proves \eqref{eq:fm-exact-mass} without pairing a Borel function
with a distribution at a singular point.
\end{proof}

\begin{proof}[Proof of Theorem~\ref{main:mass}]
Choose $A>0$ so that $A\omega-\Theta_0\ge0$ on $M$ and set
\[
                         u_n=\max\{\varphi,-n\}.
\]
Each $u_n$ is bounded and Borel, continuous along every regular leaf,
and equal to $-n$ on an ambient neighborhood of $E$.
On hyperbolic plaques, \eqref{eq:fm-curvature} and the maximum principle
for quasi-subharmonic functions give
$A\omega+dd^c_{\mathcal F}u_n\ge0$; on nonhyperbolic plaques $u_n=-n$.
Lemma~\ref{lem:bounded-leafwise-stokes} therefore gives positive measures
$\mu_n$ of mass $A\langle T,\omega\rangle$.

Let $H$ denote the union of hyperbolic regular leaves and put
$H_n=\{\varphi>-n\}\cap H$.  These sets are Borel, open along each leaf,
and increase to $H$.  Locality of one-variable Riesz measures gives
\[
 \mu_n|_{H_n}
   =T\wedge(A\omega-\Theta_0+\omega_{\mathrm P})|_{H_n}
   \ge (T\wedge\omega_{\mathrm P})|_{H_n}.
\]
Consequently
\[
 (T\wedge\omega_{\mathrm P})(H_n)
       \le A\langle T,\omega\rangle.
\]
Monotone convergence for the fixed positive measure
$T\wedge\omega_{\mathrm P}$ proves the asserted bound.  No limit of the
varying measures $\mu_n$ is needed.

If $T$ is nonzero, extremal, and has full trace mass on $H$, its
Poincar\'e mass is finite and strictly positive.  Divide $T$ by this
mass and apply \cite[Theorems~7.1 and~7.7]{DNS2012}.
Theorem~7.7 gives precisely the ambient-trace almost-everywhere conclusion
and the normalization $T/\langle T,\omega\rangle$.  Its weighted disk
current has mass
\[
 \int_{\mathbb D}\log^+\frac r{|\zeta|}\,\phi_a^*\omega
       =\int_0^r\frac{dt}{t}\int_{\mathbb D(t)}\phi_a^*\omega
       =\mathcal T_a(r),
\]
so it is exactly the Nevanlinna current in the statement.
\end{proof}

\section{A diffuse projective example with a uniform rate}
\label{sec:rate}

Fix the convention $\ddc=(i/\pi)\partial\bar\partial$.  Let $S$ be a compact
Riemann surface of genus at least two and let
\[
 A=\mathbb C/(\mathbb Z+i\mathbb Z).
\]
Choose a Poincar\'e form $\omega_S$ whose lift to $\D$ is
\begin{equation}
 \omega_\D=\ddc[-\log(1-\abs{z}^2)]
 =\frac{i}{\pi}\frac{dz\wedge d\bar z}{(1-\abs{z}^2)^2}.                                  \label{eq:omegaD}
\end{equation}
Pick $0\neq\eta_0\in H^0(S,K_S)$.  Some closed loop $\gamma_0$ has
$p_0=\int_{\gamma_0}\eta_0\neq0$, since otherwise $\eta_0$ would have a
single-valued holomorphic primitive on compact $S$.  Put
\[
 \xi=\sqrt2+i\sqrt3,
 \qquad \eta=\frac \xi {p_0}\eta_0.
\]
On the projective surface $M=S\times A$, let $dw$ be the global holomorphic
one-form from $A$ and define
\begin{equation}
 \alpha=dw-\eta,
 \quad \F=\ker\alpha,
 \quad \beta=i\alpha\wedge\bar\alpha,
 \quad \Omega=\omega_S+\beta,
 \quad V=\int_M\beta\wedge\omega_S,
 \quad T=\frac\beta V.                                                                  \label{eq:construction}
\end{equation}

\begin{proof}[Proof of Theorem~\ref{main:rate}]
\emph{Geometry and mass.}
The one-form $\alpha$ is closed and nowhere zero because
$\alpha(\partial_w)=1$.  Thus its kernel is integrable and nonsingular.
The two positive summands in $\Omega$ control respectively the base and the
remaining vertical direction, so $\Omega$ is K\"ahler and
$\Omega|_{T\F}=\omega_S$.  Along a loop $\gamma$, leaf holonomy on $A$ is
translation by $\int_\gamma\eta$.  In particular $\gamma_0$ translates by
$\xi$.  Since $1,\sqrt2,\sqrt3$ are linearly independent over $\mathbb Q$, the
cyclic subgroup generated by $\xi$ is dense in $A$.  Every leaf is therefore
dense.  Its projection to $S$ is a covering, so its universal cover is $\D$.

The form $\beta$ is positive, closed, decomposable, and
$\beta\wedge\alpha=0$.  Hence $T$ is positive, closed, and directed.  Since
$\beta^2=0$,
\[
 \pair{T}{\Omega}=V^{-1}\int_M\beta\wedge\omega_S=1.
\]
Every term of $\beta$ containing $\eta$ wedges to zero with $\omega_S$ on the
one-dimensional base, so
\begin{equation}
 \beta\wedge\omega_S=i\,dw\wedge d\bar w\wedge\omega_S.                                 \label{eq:product-volume}
\end{equation}
Thus the trace measure is normalized smooth product volume and $T$ is
diffuse.

\emph{Extremality.}
Suppose $T=T_1+T_2$ with $T_j$ positive, directed, and $\ddc$-closed.  Rank-one
positivity and $0\leq T_j\leq T$ imply $T_j=h_jT$ for measurable
$0\leq h_j\leq1$.  Locally $\alpha=dt$ in holomorphic foliated coordinates
$(z,t)$, and
\[
 \ddc\bigl(h_j\,i\,dt\wedge d\bar t\bigr)=0
\]
is precisely weak leafwise harmonicity of $h_j$.

Write $w=x+iy$, $\eta=a+ib$ with real closed harmonic one-forms $a,b$, and
Fourier-expand in $L^2(S\times A)$:
\[
 h=\sum_{k=(m,n)\in\mathbb Z^2}h_k(s)e^{2\pi i(mx+ny)}.
\]
On the $k$th mode, the leafwise differential is the unitary flat connection
\[
 D_k=d+2\pi i(ma+nb),\qquad P_k=D_k^*D_k.
\]
Here the adjoint is taken for the conformal Riemannian metric on $S$ whose area
form is $\omega_S$. Weak harmonicity gives $P_kh_k=0$, and elliptic regularity makes $h_k$ smooth.
For $k\neq0$, the holonomy around $\gamma_0$ is
$\exp(2\pi i(m\sqrt2+n\sqrt3))\neq1$; hence the flat line bundle has no
parallel section.  The energy identity
$\pair{P_kh_k}{h_k}=\norm{D_kh_k}_2^2$ gives $h_k=0$.  The zero mode is a
harmonic function on compact $S$ and is constant.  Fourier uniqueness makes
$h$ constant almost everywhere, proving extremality.

\emph{A global leafwise Poisson equation.}
Write the restriction of $\theta$ to $T\F$ as
\[
 \theta|_{T\F}=f_\theta\omega_S.
\]
By \eqref{eq:product-volume},
\[
 c:=\pair{T}{\theta}
 =\frac1V\int_M f_\theta\,\beta\wedge\omega_S
\]
is the product mean of $f_\theta$.  We claim that there is a smooth function
$u_\theta$ on $M$ satisfying
\begin{equation}
 \ddc_{\F}u_\theta=(f_\theta-c)\omega_S,
 \qquad
 \norm{u_\theta}_\infty\leq C\norm{\theta}_{C^{11}}.                                    \label{eq:poisson}
\end{equation}
For the zero fiber-Fourier mode the compatibility condition is exactly the
zero integral on $S$, so the ordinary Poisson equation is solvable.  The relation between the nonnegative Laplacian and our convention is
$dd^c_{\F}u=-(P_ku)\omega_S/(2\pi)$ on the $k$th mode. For a
nonzero mode $k=(m,n)$, set
$u_k=-2\pi P_k^{-1}(f_\theta)_k$; for the zero mode choose the
mean-zero solution with right-hand side $-2\pi((f_\theta)_0-c)$.
We record the quantitative point.
Let
\[
 \delta_k=\dist(m\sqrt2+n\sqrt3,\mathbb Z).
\]
If $\ell$ is a nearest integer, the nonzero algebraic integer
$m\sqrt2+n\sqrt3-\ell\in\mathbb Q(\sqrt2,\sqrt3)$ has nonzero integral norm.
Its other three conjugates are $O(1+\abs k)$, whence
\begin{equation}
 \delta_k\geq c(1+\abs k)^{-3}.                                                          \label{eq:diophantine}
\end{equation}
On the compact torus of unitary characters of $S$, the bottom eigenvalue of a
flat connection Laplacian satisfies
\[
 \lambda_0(\chi)\geq c\abs{\chi(\gamma_0)-1}^2.
\]
For completeness, choose harmonic connection one-forms $\upsilon$ near the
trivial character. The operator $(d+i\upsilon)^*(d+i\upsilon)$ has a
simple zero eigenvalue at $\upsilon=0$. Its first variation on constants
vanishes because $d^*\upsilon=0$; its quadratic eigenvalue term is
$\|\upsilon\|_2^2/\operatorname{Vol}(S)$. Analytic perturbation therefore
gives $\lambda_0\ge c\|\upsilon\|^2$ near zero. The character value on
$\gamma_0$ differs from $1$ by at most $C\|\upsilon\|$.
Away from the trivial character the bottom eigenvalue is positive,
and compactness of the character torus completes the estimate.  Together with
\eqref{eq:diophantine},
\begin{equation}
 \lambda_0(P_k)\geq c(1+\abs k)^{-6}.                                                     \label{eq:twisted-gap}
\end{equation}
The inverse therefore loses only a fixed polynomial in $k$.  More explicitly,
gauge each unitary character into a compact harmonic fundamental domain.  In
that gauge the coefficients of the connection Laplacians and the $H^2$
elliptic constants are uniform, while
\[
 \norm{u_k}_2\leq 2\pi\lambda_0(P_k)^{-1}\norm{(f_\theta)_k}_2
 \leq C(1+\abs k)^6\norm{(f_\theta)_k}_2.
\]
The uniform $H^2$ estimate and $H^2(S)\hookrightarrow C^0(S)$ give the same
sixth-order loss in that gauge.  The gauge is unitary and hence preserves pointwise absolute values.
In particular, the following weaker bound is sufficient:
\begin{equation}
 \norm{u_k}_\infty\leq
 C(1+\abs k)^8\norm{(f_\theta)_k}_2.                                                     \label{eq:inverse-bound}
\end{equation}
Eleven fiber derivatives give
$\norm{(f_\theta)_k}_2\leq
C(1+\abs k)^{-11}\norm{\theta}_{C^{11}}$; the resulting
$(1+\abs k)^{-3}$ series is summable on $\mathbb Z^2$.  This proves the
$C^0$ estimate in \eqref{eq:poisson}; the corresponding higher elliptic estimates have only polynomial losses in
$k$, since the product-gauge connection coefficients grow linearly in $k$.
The Fourier coefficients of a smooth $f_\theta$ decay faster than every
power, so the series and all its derivatives converge and the solution is smooth.

\emph{Jensen's formula.}
A universal cover has the form
\[
 \phi_a(z)=\bigl(\pi(z),w_0+H(z)-H(0)\bigr),
 \qquad dH=\pi^*\eta.
\]
Here $\pi\colon\D\to S$ is a universal covering centered at the
base point of $a$, and $w_0$ is its elliptic coordinate.
Hence $\phi_a^*\alpha=0$ and $\phi_a^*\Omega=\omega_\D$.  By the chosen
normalization and Jensen's formula,
\begin{equation}
 \mathcal T_a(r)=\int_{\abs z<r}\log\frac r{\abs z}\,\omega_\D=-\log(1-r^2).                       \label{eq:exact-denom}
\end{equation}
Moreover
$\phi_a^*(\theta-c\Omega)=\ddc(u_\theta\circ\phi_a)$, so
\[
 \pair{\tau_r^a}{\theta}-c
 =\frac{\displaystyle
       \frac1{2\pi}\int_0^{2\pi}u_\theta(\phi_a(re^{it}))\,dt-u_\theta(a)}
      {-\log(1-r^2)}.
\]
The numerator has absolute value at most $2\norm{u_\theta}_\infty$.
Consequently
\[
 \big|\langle\tau_r^a-T,\theta\rangle\big|
 \le\frac{2C\|\theta\|_{C^{11}}}{-\log(1-r^2)}.
\]
Absorbing $2$ into the geometric constant proves the required estimate.
\end{proof}

\section{Logarithmic growth on every hyperbolic leaf}
\label{sec:every-leaf}

The intrinsic metric lemma also proves the every-leaf assertion without
using the measure-theoretic part of Theorem~\ref{main:mass}.

\begin{proof}[Proof of Theorem~\ref{main:growth}]
Fix a center $a$ on a hyperbolic leaf and its universal covering.
Put $B=\sup_{M\setminus E}\varphi<\infty$ and choose $A>0$ with
$\Theta_0\le A\omega$.  Along the covering disk,
\eqref{eq:fm-curvature} gives
\[
 \omega_{\mathrm P}
       =dd^c(\varphi\circ\phi_a)+\phi_a^*\Theta_0.
\]
The function $\varphi\circ\phi_a$ is smooth and finite on each compact
subdisk.  Jensen's formula and \eqref{eq:fm-poincare-normalization} yield
\begin{align*}
 -\log(1-r^2)
 &=\frac1{2\pi}\int_0^{2\pi}
          \varphi(\phi_a(re^{it}))\,dt-\varphi(a)
       +\int_{\mathbb D}\log^+\frac r{|\zeta|}\,\phi_a^*\Theta_0\\
 &\le B-\varphi(a)+A\mathcal T_a(r).
\end{align*}
Take $c_0=A^{-1}$ and $C_a=A^{-1}(B-\varphi(a))\ge0$.
Since $-\log(1-r^2)\ge\log(1/(1-r))-\log2$, this also gives
\[
 \mathcal T_a(r)\ge\frac{c_0}{2}\log\frac1{1-r}
\]
for $r$ sufficiently close to $1$, with the threshold allowed to depend
on $a$. The proof has used only the leafwise metric and no directed
current, so it applies when no diffuse directed current exists.
\end{proof}

\section{Open problems}
\label{sec:further-cases}

The following problems ask for extensions of Theorems~\ref{main:mass},
\ref{main:rate}, and~\ref{main:growth}, respectively. They specify cases
not settled in this paper; we do not claim that every special case is
unknown in the literature.

\begin{openproblem}
\label{open:mass}
Let $\F$ be a saturated holomorphic curve foliation on a compact
K\"ahler manifold $(M,\omega)$, whose singular set $E$ may have positive
dimension. Is there a constant $C=C(M,\omega,\F)$ such that
\[
 \int_{M\setminus E}T\wedge\omega_{\mathrm P}
                       \le C\|T\|_\omega
\]
for every positive directed current $T$ with $dd^cT=0$ on $M$?
Under the same geometric hypotheses, does
\[
 \tau_r^a\longrightarrow T/\|T\|_\omega
 \quad\text{for }(T\wedge\omega)\text{-almost every }a
\]
hold whenever $T\ne0$ is extremal and its trace gives full mass to
hyperbolic regular leaves?
\end{openproblem}

Theorem~\ref{main:mass} answers both questions when $E$ is finite.
The proof here does not supply a reduction from a positive-dimensional
singular set to that case.

\begin{openproblem}
\label{open:rate}
Fix a saturated holomorphic curve foliation with finite singular set
on a compact K\"ahler manifold. Suppose that it admits a nonzero
extremal positive directed $dd^c$-closed current whose trace gives full
mass to hyperbolic regular leaves. Which geometric conditions ensure
the existence of such a current $T$, normalized to mass one, and
numbers $\sigma>0$ and $N\in\mathbb N$ such that, for
$(T\wedge\omega)$-almost every $a$, there are $C_a<\infty$ and $r_a<1$
with
\[
 \big|\langle\tau_r^a-T,\theta\rangle\big|
 \le C_a\|\theta\|_{C^N}[-\log(1-r^2)]^{-\sigma}
\]
for every smooth $(1,1)$-form $\theta$ and $r_a<r<1$?
The exponents should be controlled by the stated geometric conditions.
\end{openproblem}

This asks for a concrete rate on a foliation fixed in advance.
Theorem~\ref{main:rate} supplies one class of examples with $\sigma=1$
and $N=11$, and with a constant uniform in the center. Its proof uses
translation holonomy and a polynomial spectral lower bound, neither
of which is available for a general foliation. Related questions are
to obtain such estimates on projective space, for nonclosed extremal
currents, or in examples with positive transverse entropy. We do not
claim that $N=11$ is optimal.

\begin{openproblem}\label{open:growth}
Let $\F$ be a saturated holomorphic curve foliation on a compact
K\"ahler manifold, with a possibly positive-dimensional singular set.
Does every hyperbolic regular leaf satisfy
\[
 \mathcal T_a(r)\ge c_a\log\frac1{1-r}
\]
for some $c_a>0$ and all $r$ sufficiently close to $1$, without any
assumption on the existence of a diffuse directed current? Can one
obtain the stronger estimate
\[
 \mathcal T_a(r)\ge c_0[-\log(1-r^2)]-C_a,
 \qquad 0<r<1,
\]
with one positive constant $c_0$ independent of the leaf and center?
\end{openproblem}

The first question retains the growth conclusion in Problem~2.4;
the second asks for the uniform leading coefficient of
Theorem~\ref{main:growth}. Even in that theorem, the additive constant
$C_a$ and the threshold for \eqref{eq:original-growth} may depend on
the center. No uniform bound for them near the singular set is proved
here.

\bigskip
\noindent\textbf{Statement on the use of artificial intelligence.}
The research in August 2026 used
GPT-5.6-sol for approximately 10 hours. The writing is assisted by GPT-6-astra. The author writes the title, abstract, statement of the main Theorems, statement of open questions, and this statement on the use of AI.
The author has been searching for credits and references but only within limited time. The author is willing to update such important information upon request.

\end{document}